\title[CR proof for an estimate for the Diederich--Fornaess index]
{A CR proof for a global estimate of the Diederich--Fornaess index of Levi-flat real hypersurfaces}
\author{Masanori Adachi}
\address{Center~for~Geometry~and~its~Applications, Pohang~University~of~Science~and~Technology, Pohang 790-784, Republic of Korea
\& 
Graduate~School~of~Mathematics, Nagoya~University, Nagoya 464-8602, Japan}
\email{adachi@postech.ac.kr, m08002z@math.nagoya-u.ac.jp}
\subjclass[2010]{Primary~32T27, Secondary~32V15, 53C12.}
\thanks{The author is partially supported by 
an NRF grant 2011-0030044 (SRC-GAIA) of the Ministry of Education, the Republic of Korea, and 
a JSPS Grant-in-Aid for Young Scientists (B) 26800057.
}
\keywords{Diederich--Fornaess index, CR geometry, Levi-flat real hypersurface}
\date{\today}
\newtheorem*{Thm*}{Theorem}
\newtheorem{Lem}{Lemma}
\theoremstyle{definition}
\newtheorem*{Def*}{Definition}
\theoremstyle{remark}
\newtheorem{Rem}[Lem]{Remark}
\newtheorem*{Que*}{Question}
\def\R{\mathbb{R}}
\def\C{\mathbb{C}}
\def\Cont{\mathcal{C}}
\def\rank{\mathrm{rank}}
\def\Ric{\mathrm{Ric}}
\newcommand{\base}[1]{\frac{\pa}{\pa #1}}
\def\pa{\partial}
\def\ol{\overline}
\def\opa{\overline{\partial}}
\def\bd{\partial}
\renewcommand\Re{\operatorname{Re}}
\begin{document}

\maketitle

\begin{abstract}
Yet another proof is given for a global estimate of the Diederich--Fornaess index of relatively compact domains with Levi-flat boundary, namely, the index must be smaller than or equal to the reciprocal of the dimension of the ambient space. 
This proof reveals that this kind of estimate makes sense and holds also for abstract compact Levi-flat CR manifolds.
\end{abstract}

\section{Introduction}
\label{sect:intro}
The \emph{Diederich--Fornaess index} $\eta(\Omega)$ of a $\Cont^\infty$-smoothly bounded domain $\Omega$ in a complex manifold $X$ is a numerical index on the strength of certain pseudoconvexity of its boundary $\bd\Omega$. In this paper, we consider the index in the sense that $\eta(\Omega)$ is defined to be the supremum of the exponents $\eta \in (0,1]$ admitting a $\Cont^\infty$-smooth defining function of $\bd\Omega$, say $\rho: (\bd\Omega \subset) U \to \R$, so that $-|\rho|^\eta$ is strictly plurisubharmonic in $U \cap \Omega$; if no such $\eta$ is allowed, we let $\eta(\Omega) = 0$. 

For instance, if a defining function attains $\eta = 1$, it gives a strictly plurisubharmonic defining function of $\bd\Omega$ and the boundary is strictly pseudoconvex.
The pseudoconvexity of $\bd\Omega$ is clearly necessary for $\eta(\Omega)$ to be positive; a much stronger condition is actually necessary and sufficient, the existence of a defining function $\rho$ such that the complex hessian of $-\log |\rho|$ is bounded from below by a hermitian metric of $X$ near the boundary $\bd\Omega$ as observed by Ohsawa and Sibony in \cite{ohsawa-sibony1998}.

The Diederich--Fornaess index $\eta(\Omega)$ being positive means that the boundary $\bd\Omega$ is well approximated by a family of strictly pseudoconvex real hypersurfaces from inside. The original motivation of the study of Diederich and Fornaess in \cite{diederich-fornaess1977} was to construct such an approximation on pseudoconvex domains in Stein manifolds, and the index is considered to measure certain strength of the approximation. 
Since then, the meaning of the index has been intensively studied in relation to the global regularity in the $\opa$-Neumann problem, in particular, pathologies occurring on the worm domain. See for example  \cite[\S1]{fu-shaw2014}, \cite{berndtsson-charpentier} and references therein.

\medskip

Under such circumstances, Fu and Shaw \cite{fu-shaw2014} and Brinkschulte and the author \cite{adachi-brinkschulte2014} reached a general estimate for the Diederich--Fornaess index of weakly pseudoconvex domains. 
Here we state the result in a restricted form, for domains with $\Cont^\infty$-smooth Levi-flat boundary: 

\begin{Thm*}[\cite{fu-shaw2014}, see also \cite{adachi-brinkschulte2014} and \cite{adachi_aspm}]
Let $\Omega$ be a relatively compact domain with $\Cont^\infty$-smooth Levi-flat boundary $M$ in 
a complex manifold of dimension $(n+1) \geq 2$. 
Then the Diederich--Fornaess index $\eta(\Omega)$ must be $\leq 1/(n+1)$.  
\end{Thm*}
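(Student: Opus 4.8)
The plan is to argue by contradiction, reducing the statement to a single differential inequality on a one‑sided collar of $M$ and then running an integration by parts in which the Levi‑flatness of $M$ enters decisively only at the last step. So suppose $\eta(\Omega) > 1/(n+1)$; then there are an exponent $\eta \in (1/(n+1), \eta(\Omega)]$ and a $\Cont^\infty$‑smooth defining function $\rho$, say $\rho < 0$ on $\Omega$, on an ambient neighborhood $U$ of $M$, such that $-(-\rho)^\eta$ is strictly plurisubharmonic on $U \cap \Omega$. Put $r := -\rho > 0$ and $u := -\log r$. Using the identities $i\partial\bar\partial(-r^\eta) = \eta r^{\eta-2}\bigl((1-\eta)\,i\partial r \wedge \bar\partial r - r\, i\partial\bar\partial r\bigr)$ and $i\partial\bar\partial u = i\partial u \wedge \bar\partial u - \tfrac1r\, i\partial\bar\partial r$, the hypothesis becomes
\[
\omega_u \;:=\; i\partial\bar\partial u \;>\; \eta\, i\partial u \wedge \bar\partial u \qquad\text{on the collar } U \cap \Omega = \{0 < r < r_0\}.
\]
Thus $\omega_u$ is a Kähler form there, $u$ is a Kähler potential tending to $+\infty$ along $M$, and, since $i\partial u \wedge \bar\partial u$ has rank one, the inequality is equivalent to the pointwise bound $|\partial u|^2_{\omega_u} < 1/\eta$.

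Next I would exploit the trace identity obtained by contracting $\omega_u = i\partial u \wedge \bar\partial u - \tfrac1r i\partial\bar\partial r$ with $\omega_u$ itself, namely $n+1 = |\partial u|^2_{\omega_u} - \tfrac1r \operatorname{tr}_{\omega_u}(i\partial\bar\partial r)$. Rewriting this in terms of top‑degree forms gives
\[
-\, i\partial\bar\partial r \wedge \omega_u^{\,n} \;=\; \frac{r}{n+1}\bigl((n+1) - |\partial u|^2_{\omega_u}\bigr)\,\omega_u^{\,n+1} \;\geq\; \frac{(n+1)-1/\eta}{n+1}\; r\,\omega_u^{\,n+1} \;>\; 0
\]
on the collar, the strict positivity of the constant being exactly where the hypothesis $\eta > 1/(n+1)$ is used. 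Since $\bar\partial\bar\partial r = 0$ and $\omega_u^{\,n}$ is closed, $i\partial\bar\partial r \wedge \omega_u^{\,n} = d\bigl(i\bar\partial r \wedge \omega_u^{\,n}\bigr)$; integrating the displayed inequality over $\{\delta < r < r_0\}$ and applying Stokes's theorem yields, with $F(t) := \int_{\{r = t\}} i\bar\partial r \wedge \omega_u^{\,n}$,
\[
F(\delta) - F(r_0) \;\geq\; \frac{(n+1)-1/\eta}{n+1} \int_{\{\delta < r < r_0\}} r\,\omega_u^{\,n+1} \qquad (0 < \delta < r_0).
\]
On a Levi‑flat collar a single eigenvalue of $\omega_u$ blows up like $r^{-2}$ while the leafwise block stays bounded, so $\omega_u^{\,n+1}$ is comparable to $r^{-2}$ times a fixed volume form and the right‑hand side tends to $+\infty$ as $\delta \downarrow 0$; hence $F(\delta) \to +\infty$.

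The contradiction comes from the opposite behavior of $F(\delta)$ forced by the Levi‑flatness. Each level set $\{r = \delta\}$ is compact, diffeomorphic to $M$, and on it $i\partial r \wedge \bar\partial r$ restricts to zero because $dr$ does; this annihilates precisely the single unbounded ($\sim r^{-1}$) term in the restriction of $\omega_u$ to $\{r = \delta\}$, while the leafwise block of $\omega_u$ stays bounded along the collar exactly because the Levi form of $M$ vanishes (contrast: if the Levi form did not vanish, that block would itself blow up, and the argument would, correctly, break down, as it must for a strictly pseudoconvex boundary). Therefore $i\bar\partial r \wedge \omega_u^{\,n}$ restricts to $\{r = \delta\}$ as a form bounded independently of $\delta$, so $F(\delta)$ stays bounded as $\delta \downarrow 0$ — contradicting $F(\delta) \to +\infty$. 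Hence $\eta(\Omega) \leq 1/(n+1)$.

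The hard part, and the genuinely CR‑geometric part, is the boundary analysis underpinning the last two paragraphs: one must control the whole $(n+1)\times(n+1)$ block structure of $\omega_u$ and of its powers uniformly as $r \downarrow 0$, identify the limiting objects on $M$ (the D'Angelo $(1,0)$‑form $\alpha = \lim_{r\to 0}(\partial r/r)|_{T^{1,0}M}$, a leafwise Hermitian metric, the transverse Reeb coframe), verify the divergence $\int_{\mathrm{collar}} r\,\omega_u^{\,n+1} = +\infty$ — which boils down to the leafwise block of $\omega_u$ remaining nondegenerate in the limit, a point to be squeezed out of the strict inequality together with the compactness of $M$ — and check that the $\sim r^{-1}$ part of $\omega_u|_{\{r = \delta\}}$ really contributes nothing to $F(\delta)$, being the wedge of a fixed one‑form with itself. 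Since all of these objects are expressed through the CR structure of $M$ alone, the same argument yields the estimate for abstract compact Levi‑flat CR manifolds.
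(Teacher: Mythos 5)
Your route is genuinely different from the paper's: the paper never integrates inside $\Omega$ at all, but first converts the hypothesis into an inequality for boundary data (a metric $h^2$ on $N^{1,0}_M$ with $i\Theta_h - \frac{\eta}{1-\eta}i\alpha_h\wedge\overline{\alpha_h}>0$ on $M$, via the equivalence $\eta(\Omega)=\eta(M)$ of Lemma \ref{dfindex}, which rests on \cite{adachi_localformula}) and then gets the contradiction from a Stokes identity intrinsically on the compact manifold $M$. Much of your collar argument is sound and can be made rigorous: the reformulation $\omega_u>\eta\, i\partial u\wedge\overline{\partial} u$, the trace identity, the exactness $i\partial\overline{\partial} r\wedge\omega_u^n=d(i\overline{\partial} r\wedge\omega_u^n)$, and also the boundedness of $F(\delta)$ (on $\{r=\delta\}$ the factor $\iota^*\overline{\partial} r$ forces the top form to see only the leafwise block of $\omega_u$, which is $O(1)$ because the Levi form of $M$ vanishes, so the $O(\delta^{-1})$ mixed terms drop out).

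The genuine gap is the divergence claim $\int_{\{0<r<r_0\}} r\,\omega_u^{\,n+1}=+\infty$, which is the step carrying the whole contradiction. The leafwise block of $\omega_u$ at height $r$ is $-\tfrac1r\, i\partial\overline{\partial} r$ restricted to $\ker\partial r$, and the hypothesis (strict plurisubharmonicity of $-(-\rho)^\eta$ on the \emph{open} collar) only says this block is positive for each $r>0$; it gives no uniform lower bound as $r\to 0$, and compactness of $M$ cannot upgrade a pointwise strict inequality on an open set to uniformity at its boundary. Concretely, Cauchy--Schwarz applied to the positive form $(1-\eta)i\partial r\wedge\overline{\partial} r - r\, i\partial\overline{\partial} r$ bounds the leafwise eigenvalues from below only by quantities proportional to $|\,\partial\overline{\partial} r(\nu,\overline{w})\,|^2$, i.e.\ by $|\alpha(w)|^2$ for the D'Angelo/connection form $\alpha$, which may vanish at points and directions; so ``the leafwise block remains nondegenerate in the limit'' is not a consequence of your hypotheses, and if those eigenvalues decay like a power of $r$ on part of the collar then $r\,\omega_u^{\,n+1}$ is integrable there and $F(\delta)$ can stay bounded with no contradiction. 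Filling this hole requires exactly the boundary Taylor-expansion analysis that the paper outsources to the local formula $\eta_\rho=\eta_{h_\rho}$ of \cite{adachi_localformula}, which trades the interior condition for a \emph{strict inequality on the compact boundary} $M$, where compactness does give uniform constants --- and the paper then sidesteps your divergence issue entirely by doing the Stokes argument on $M$ with $\alpha_h$, $\Theta_h$ and the transverse $1$-form. (A smaller point: your formula $\alpha=\lim_{r\to 0}(\partial r/r)|_{T^{1,0}M}$ is $0/0$ as written, since $\partial r$ annihilates $T^{1,0}M$ on $M$; the correct limiting object is the mixed, normal--tangential part of the Hessian, which is another sign that the boundary identification you defer is the real content.) As it stands, the proposal is not a complete proof without that ingredient or an independent argument for the divergence.
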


The purpose of this paper is to give yet another proof of Theorem via an estimate on the Levi-flat boundary $M$ without looking inside $\Omega$ directly. 
The idea is to identify the usual Diederich--Fornaess index $\eta(\Omega)$ with its counterpart $\eta(M)$ on  the Levi-flat boundary 
based on the author's previous work \cite{adachi_localformula}.

\begin{Def*}
Let $M$ be an oriented $\Cont^\infty$-smooth Levi-flat CR manifold. 
The \emph{Diederich--Fornaess index} $\eta(M)$ of $M$ is defined to be the supremum of $\eta \in (0, 1]$
admitting a $\Cont^\infty$-smooth hermitian metric $h^2$ of the holomorphic normal bundle $N^{1,0}_M$ of $M$ so that 
\[
i\Theta_{h} - \frac{\eta}{1-\eta} i \alpha_h \wedge \ol{\alpha_h} > 0 
\]
holds on $M$ as quadratic forms on the holomorphic tangent bundle $T^{1,0}_M$ of $M$; if no such $\eta$ is allowed, we let $\eta(M) = 0$. 
Here the forms $\alpha_h$ and $\Theta_h$ denote the leafwise Chern connection form and its curvature  form of $N^{1,0}_M$ with respect to $h^2$ respectively. (See \S\ref{sect:prelim} for their precise definitions.) 
\end{Def*}

In our setting,  $\eta(\Omega)$ agrees with $\eta(M)$ as we will see in Lemma \ref{dfindex}, 
and Theorem follows from the following main lemma.

\begin{Lem}
\label{main}
Let $M$ be a compact $\Cont^\infty$-smooth Levi-flat CR manifold of dimension $(2n+1) \geq 3$.  
Then the Diederich--Fornaess index $\eta(M)$ must be $\leq 1/(n+1)$.
\end{Lem}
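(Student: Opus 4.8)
The plan is to argue by contradiction: suppose $\eta(M) > 1/(n+1)$, so there exist $\eta > 1/(n+1)$ and a smooth hermitian metric $h^2$ on $N^{1,0}_M$ with
\[
i\Theta_h - \frac{\eta}{1-\eta}\, i\alpha_h \wedge \overline{\alpha_h} > 0
\]
as a quadratic form on $T^{1,0}_M$ along $M$. The key object to extract from this inequality is a global differential-geometric quantity that, on the one hand, is forced to be positive by the hypothesis, and on the other hand must vanish or have vanishing integral because $M$ is compact and Levi-flat (so the leafwise structure is that of a foliation by complex $n$-manifolds). Concretely, I would restrict everything to a leaf $L$ of the Levi foliation, where $N^{1,0}_M$ restricts to a holomorphic line bundle, $\alpha_h$ becomes the Chern connection form of a metric on it, and $i\Theta_h$ its curvature; the twisted positivity says the curvature dominates $\tfrac{\eta}{1-\eta}$ times the $(1,1)$-form $i\alpha_h\wedge\overline{\alpha_h}$, which is (leafwise) $i\partial\phi\wedge\overline{\partial}\phi$ for $\phi = -\log h$ in a local trivialization.

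The main step is a Bochner–Kohn–Morrey–type integral identity on $M$. I expect to choose a volume form on $M$ adapted to the foliation and integrate the top power of the positive $(1,1)$-form $\omega := i\Theta_h - \tfrac{\eta}{1-\eta} i\alpha_h\wedge\overline{\alpha_h}$ — or rather, test the inequality against a carefully chosen positive leafwise $(n,n)$-form and integrate over $M$ using a transverse measure or simply the ambient volume. The point is that $\int_M i\Theta_h \wedge (\text{leafwise volume})$ — more precisely the relevant combination — should be expressible, after a leafwise integration by parts (Stokes on each leaf, legitimate because $M$ is compact with no boundary and the foliation has no boundary leaves), in terms of $\int_M i\alpha_h \wedge \overline{\alpha_h} \wedge(\cdots)$ with a universal constant depending only on $n$; comparing this constant with the factor $\tfrac{\eta}{1-\eta}$ produces the bound $\tfrac{\eta}{1-\eta} \le \tfrac{1}{n}$, i.e. $\eta \le 1/(n+1)$. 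The curvature term $i\Theta_h$ is leafwise $\overline{\partial}\partial(-\log h^2)$ and its integral pairs with $(i\alpha_h\wedge\overline{\alpha_h})^{?}$ through the elementary pointwise inequality relating $i\partial\overline{\partial}\phi \wedge (i\partial\overline{\partial}\psi)^{n-1}$ and $i\partial\phi\wedge\overline{\partial}\phi\wedge(i\partial\overline{\partial}\psi)^{n-1}$, or more directly through $\Delta$-type estimates on each $n$-dimensional leaf.

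The hard part, I anticipate, is handling the transverse direction rigorously: the leaves need not be compact, there may be no holonomy-invariant transverse measure, and one cannot naively "integrate leaf-by-leaf." The standard way around this is to work with the ambient $(2n+1)$-form directly — write $\omega^n \wedge \beta$ for a suitable transverse $1$-form $\beta$ obtained from a defining $1$-form $\theta$ of the foliation (using that $M$ is Levi-flat, $d\theta \equiv 0 \pmod{\theta}$), and perform the integration by parts in the ambient $d$ on $M$, absorbing the transverse derivatives of $h$ into error terms that are controlled by the strict positivity of $\omega$ (they carry a small parameter one can make negligible, or they integrate to zero by Stokes on the closed manifold $M$). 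Making this bookkeeping airtight — showing the transverse terms genuinely do not spoil the universal constant — is where the real work lies; everything else is the classical one-variable-per-leaf computation that yields the factor $1/n$.
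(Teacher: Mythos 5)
Your overall strategy coincides with the paper's: argue by contradiction, form the leafwise-positive $(1,1)$-form $\omega := i\Theta_h - \tfrac{\eta}{1-\eta}\, i\alpha_h\wedge\overline{\alpha_h}$, pair its top power with a transverse $1$-form, integrate over the closed manifold $M$, and let Stokes' theorem produce the contradiction. But the step you explicitly defer as ``where the real work lies'' is the entire content of the lemma, and the mechanism you propose for it would not succeed. The hypothesis gives positivity of $\omega$ only as a quadratic form on $T^{1,0}_M$, i.e.\ along the leaves; it gives no control whatsoever on the transverse derivatives of $h$, and there is no small parameter anywhere in the problem, so the transverse terms cannot be ``absorbed into error terms controlled by the strict positivity of $\omega$.'' Nor do they integrate to zero by Stokes for a generic defining $1$-form $\theta$ of the foliation: with an arbitrary choice of $\theta$ the integrand is simply not exact, and no leafwise Bochner/Kohn--Morrey identity by itself produces the universal constant $1/n$.

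The missing idea is the specific coupling of the transverse form to the metric: after passing to a double cover to orient $M$, set $\beta := h_U\, dt_U$ on positively oriented foliated charts, where $h_U = h(\partial/\partial t_U)$. This is a globally well-defined $1$-form, and precisely because it is weighted by $h$ it satisfies $d\beta = (\alpha_h + \overline{\alpha_h})\wedge\beta$, together with $(d\alpha_h)\wedge\beta = \Theta_h\wedge\beta$; wedging with $\beta$ also removes the ambiguity in extending $\alpha_h$ and $\Theta_h$ to forms on $M$. One may moreover replace the coefficient $\tfrac{\eta}{1-\eta}>\tfrac1n$ by exactly $\tfrac1n$ (legitimate since $i\alpha_h\wedge\overline{\alpha_h}\ge 0$), and with this exact coefficient one has the identity
\[
d\Bigl( \bigl(i\Theta_h - \tfrac1n\, i\alpha_h\wedge\overline{\alpha_h}\bigr)^{n-1}\wedge i\alpha_h\wedge\beta \Bigr)
= \bigl(i\Theta_h - \tfrac1n\, i\alpha_h\wedge\overline{\alpha_h}\bigr)^{n}\wedge\beta ,
\]
with no error terms at all: the contribution of $d\beta$ is exactly what is needed to close the algebra, and this is why the threshold $1/(n+1)$ appears. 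Stokes' theorem on the compact $M$ then forces $\int_M \bigl(i\Theta_h - \tfrac1n\, i\alpha_h\wedge\overline{\alpha_h}\bigr)^{n}\wedge\beta = 0$, contradicting leafwise positivity. So you identified the right global quantity and correctly diagnosed the transverse difficulty (non-compact leaves, no invariant transverse measure), but without the choice $\beta = h_U\,dt_U$ and the resulting exactness your outline does not yet constitute a proof.
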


\medskip

The organization of this paper is as follows. 
In \S\ref{sect:prelim}, we provide preliminaries on CR geometry and 
confirm that the two notions of Diederich--Fornaess index, $\eta(\Omega)$ and $\eta(M)$, actually coincide for Levi-flat real hypersurfaces based on previous works.   
In \S\ref{sect:proof}, after proving Lemma \ref{main}, we give a remark that the substantial content of Lemma \ref{main} has been already pointed out by Bejancu and Deshmukh \cite{bejancu_deshmukh-1996} in manner of differential geometry, and conclude this paper with an open question.

\subsection*{Acknowledgements}
The author gratefully acknowledges an enlightening discussion with J. Brinkschulte. He is also grateful to T. Inaba for his useful remarks.

\section{Preliminaries}
\label{sect:prelim}

\subsection{Levi-flat CR manifold}
Let us recall the notion of Levi-flat CR manifold briefly. 
In the sequel, ``smooth'' means infinitely differentiable.

Let $M$ be a smooth manifold of dimension $(2n+1) \geq 3$. 
A \emph{CR structure} (of hypersurface type) of $M$ is given by a subbundle $T^{0,1}_M \subset \C \otimes_\R TM$
satisfying the following conditions:
\begin{itemize}
\item $T^{0,1}_M$ is a smooth $\C$-subbundle $T^{0,1}_M \subset \C \otimes_\R TM$ of $\rank_\C$ $n$;
\item $T^{1,0}_M \cap T^{0,1}_M = $ (the zero section) where $T^{1,0}_M := \{ v \in  \C \otimes_\R TM \mid \ol{v} \in T^{0,1}_M\}$;
\item $[ \Gamma(T^{0,1}_M), \Gamma(T^{0,1}_M)] \subset \Gamma(T^{0,1}_M)$
\end{itemize}
where $\Gamma(\,\cdot\,)$ denotes the space of smooth sections of the bundle, and the bracket means the Lie bracket of complexified vector fields. 
The pair $(M, T^{0,1}_M)$ is called a \emph{CR manifold}, which is regarded as an abstraction of real hypersurfaces in complex manifolds 
associated with their (anti-)holomorphic tangent bundles.

We say that a CR manifold $(M, T^{0,1}_M)$ is \emph{Levi-flat} if it satisfies further integrability condition
\begin{equation}
\label{leviflat}
[ \Gamma(T^{1,0}_M \oplus T^{0,1}_M), \Gamma(T^{1,0}_M \oplus T^{0,1}_M)] \subset \Gamma(T^{1,0}_M \oplus T^{0,1}_M).
\end{equation}
This is equivalent to say that the real distribution $H_M := \Re (T^{1,0}_M \oplus T^{0,1}_M) \subset TM$ of $\rank_\R$ $2n$ is integrable in the sense of Frobenius. 
It follows from Frobenius' theorem and Newlander--Nirenberg's theorem that 
the distribution $H_M$ defines a smooth foliation $\mathcal{F}$ by complex manifolds on $M$, 
namely, we have an atlas consisting of foliated charts. We call $\mathcal{F}$ the \emph{Levi foliation}. 

For a Levi-flat CR manifold $(M, T^{0,1}_M)$, we shall refer to $T^{1,0}_M$ as the \emph{holomorphic tangent bundle} of $M$ and 
call the quotient $\C$-line bundle $N^{1,0}_M$, 
\[
0 \to T^{1,0}_M \oplus T^{0,1}_M \to \C \otimes_\R TM \overset{\pi}{\to} N^{1,0}_M \to 0, 
\]
the \emph{holomorphic normal bundle}.  
This is because $T^{1,0}_M$ agrees with the holomorphic tangent bundle of the leaves of the Levi foliation $\mathcal{F}$.
Note that our holomorphic tangent bundle is distinct from 
$(\C \otimes_\R TM)/T^{0,1}_M$ and 
our \emph{$(p,q)$-form} on $M$ means a section of 
$\bigwedge^p (T^{1,0}_M)^* \otimes \bigwedge^q (T^{0,1}_M)^* \subset \bigwedge^{p+q} (T^{1,0}_M \oplus T^{0,1}_M)^*$.

Now let us consider a Levi-flat CR manifold, simply denoted by $M$, 
and define the form $\alpha_h$ mentioned in \S\ref{sect:intro}.
Fix a smooth hermitian metric $h^2$ of $N^{1,0}_M$; in our convention, we denote by
$h: N^{1,0}_M \to \R$ the map given by the norm induced from $h^2$ on $(N^{1,0}_M)_p$ for each $p \in M$.
Pick a local smooth section $\xi$ of $N^{1,0}_M$ around $p \in M$ so that it is both normalized by $h^2$ and real, i.e., $\ol{\xi} = \xi$,
which is determined up to its sign. Using such a $\xi$, we define the $(1,0)$-form $\alpha_h: T^{1,0}_M \to \C$ so as to satisfy
\begin{equation}
\label{bott}
\pi([v, \widetilde{\xi}]_p) = -\alpha_h(v_p) \xi_p
\end{equation}
for $v_p \in (T^{1,0}_M)_p$ where $\widetilde{\xi}$ and $v$ are any lift and extension of $\xi$ and $v_p$ to local sections of $\C \otimes_\R TM$ respectively.
Here we used the Levi-flatness (\ref{leviflat}) to assure that $\alpha_h$ is independent of the choice of $\xi$, $\widetilde{\xi}$ and $v$. 
We define $\ol{\alpha}_h (\ol{v_p}) := \ol{\alpha_h(v_p)}$, the complex-conjugate $(0,1)$-form of $\alpha_h$.

\begin{Rem}
The left hand side of (\ref{bott}) is the covariant derivative of $\xi$ along $v_p$ with respect to a complex Bott connection of the Levi foliation $\mathcal{F}$
and the form $\alpha$ is considered to measure the size of infinitesimal holonomy of $\mathcal{F}$ with respect to $h^2$. 
\end{Rem}

We give the $(1,1)$-form $\Theta_h: T^{1,0}_M \otimes T^{0,1}_M \to \C$ by 
\begin{align*}
\Theta_h(v_p \otimes \ol{w}_p) 
&:= v_p \alpha_h(\ol{w}) - \ol{w}_p \alpha_h(v) - \alpha_h([v, \ol{w}]_p) \\
&= - \ol{w}_p \alpha_h(v) - \alpha_h([v, \ol{w}]_p)
\end{align*}
where ${v}$ and $\ol{w}$ are arbitrary extensions of $v_p$ and $\ol{w}_p$ to local sections of $T^{1,0}_M $ and $T^{0,1}_M$ respectively. 
We again used the Levi-flatness (\ref{leviflat}) for the last term to be defined.

\subsection{Description on foliated charts}
\label{foliation}
Although we have defined the forms $\alpha_h$ and $\Theta_h$ in a coordinate-free manner, 
their descriptions on foliated charts are convenient in actual computations. Here we briefly introduce them. 

Take a \emph{foliated chart} $(U, (z_U, t_U))$ of the Levi-flat CR manifold $M$, a chart  $(z_U, t_U): U \to \C^n \times \R$ so that 
$T^{1,0}_M|U$ agrees with the pull-back bundle of $T^{1,0}\C^n \subset \C \otimes_\R T(\C^n \times \R)$. 
Any coordinate change between intersecting foliated charts, say $(U, (z_U, t_U))$ and $(V, (z_V, t_V))$, are of the form
\[
z_U = z_U(z_V, t_V), \quad t_U = t_U(t_V)
\]
where $z_U$ is holomorphic in $z_V$. A \emph{leaf} $N$ of $\mathcal{F}$ is a connected complex manifold 
injectively immersed in $M$ such that $z_U$ is holomorphic and $t_U$ is locally constant on $U \cap N$ 
for any foliated chart $(U, (z_U, t_U))$.
Our manifold $M$ is decomposed into the direct sum of the leaves of $\mathcal{F}$. 
A \emph{CR function} on $M$, a $\C$-valued function which is annihilated by vectors in $T^{0,1}_M$ by its definition, 
agrees with a function which is \emph{leafwise} holomorphic, namely, 
holomorphic in $z_U$ on any foliated chart $(U, (z_U, t_U))$. 

On a foliated chart $(U, (z_U = (z^1_U, z^2_U, \cdots, z^n_U), t_U))$, we may trivialize $T^{1,0}_M$ and $N^{1,0}_M$ by using 
\[
\left\{ \base{z^1_U}, \base{z^2_U}, \cdots, \base{z^n_U} \right\} \quad \text{and} \quad \base{t_U}
\]
respectively. This description illustrates that $T^{1,0}_M$ and $N^{1,0}_M$ are \emph{locally trivial CR vector bundles}, 
smooth vectors bundles with local trivialization covers whose transition functions are CR.
The transition functions of $N^{1,0}_M$ are much better; They are leafwise constant. 

Some computations show that on a foliated chart $(U, (z_U, t_U))$, the forms $\alpha_h$ and $\Theta_h$ 
for a given hermitian metric $h^2$ of $N^{1,0}_M$ are described as
\begin{align*}
\alpha_h &= \sum_{j=1}^n \frac{\pa \log h_U}{\pa z^j_U} dz^j_U,  \\
\Theta_h &= \sum_{j,k=1}^n \frac{\pa^2 (-\log h_U)}{\pa z^j_U \pa \ol{z}^k_U} dz^j_U \wedge d\ol{z}^k_U
\end{align*}
where $h_U := h(\base{t_U})$. 
We can see that $\alpha_h$ and $\Theta_h$ agree with the leafwise Chern connection and curvature form of $N^{1,0}_M$
with respect to $h^2$ respectively up to a positive multiplicative constant.

\subsection{The Diederich--Fornaess index}

In this subsection, we confirm that the two notions of Diederich--Fornaess index given in \S\ref{sect:intro} coincide for Levi-flat real hypersurfaces.

Let $\Omega$ be a relatively compact domain with smooth Levi-flat boundary $M$ in a complex manifold of dimension $\geq 2$. 
We introduce here terms for intermediate notions that appeared in the definition of the Diederich--Fornaess indices.
The \emph{Diederich--Fornaess exponent} $\eta_\rho$ of a fixed defining function $\rho: (\bd\Omega \subset) U \to \R$ of $\bd\Omega$ 
is the supremum of the exponents $\eta \in (0,1]$ such that $-|\rho|^\eta$ is strictly plurisubharmonic in $U \cap \Omega$; if no such $\eta$ is allowed, we let $\eta_\rho = 0$. 
We also define the \emph{Diederich--Fornaess exponent} $\eta_h$ of a fixed hermitian metric $h^2$ of $N^{1,0}_M$ in the same manner. 
The Diederich--Fornaess indices are clearly the supremum of the corresponding Diederich--Fornaess exponents.

\begin{Lem}
\label{dfindex}
We have $\eta(\Omega) = \eta(M)$.
\end{Lem}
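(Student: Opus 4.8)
The plan is to establish a correspondence between the smooth defining functions of $\bd\Omega$ defined near $M$ and the smooth hermitian metrics of the holomorphic normal bundle $N^{1,0}_M$, under which the Diederich--Fornaess exponents $\eta_\rho$ and $\eta_h$ coincide; since this correspondence is onto the set of metrics, taking suprema on both sides then yields $\eta(\Omega) = \eta(M)$. The correspondence together with the matching of exponents is the content of the local formula in the author's earlier work \cite{adachi_localformula}, which I would recall in the course of the proof.

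For the correspondence, I would first use the Levi-flatness of $M$: each point of $M$ has a neighborhood $\widetilde U$ in the ambient manifold carrying holomorphic coordinates $(z_U, w_U)\colon \widetilde U \to \C^n \times \C$ for which $M \cap \widetilde U = \{\operatorname{Im} w_U = 0\}$, $\Omega \cap \widetilde U = \{\operatorname{Im} w_U < 0\}$, and $(z_U, t_U)$ with $t_U := \operatorname{Re} w_U|_M$ is a foliated chart of $M$ as in \S\ref{foliation}. A defining function $\rho$ then reads $\rho = g_U \cdot \operatorname{Im} w_U$ on $\widetilde U$ with $g_U$ positive and smooth, and the assignment $h_U := g_U|_M$, read in the trivialization $\base{t_U}$ of $N^{1,0}_M$, is --- up to a universal positive constant --- independent of the foliated chart and so defines a smooth hermitian metric $h_\rho^2$ of $N^{1,0}_M$; intrinsically this is (a constant multiple of) the metric associated with the section $\pa\rho|_M$ of $(N^{1,0}_M)^*$. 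Conversely, every smooth hermitian metric of $N^{1,0}_M$ arises as some $h_\rho^2$: one patches the local models $g_U|_M \cdot \operatorname{Im} w_U$ by a partition of unity subordinate to a foliated cover, observing that $h_\rho$ depends only on the $1$-jet of $\rho$ along $M$, which is preserved under such patching.

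For the local formula, I would start from the identity $i\pa\opa(-|\rho|^\eta) = \eta|\rho|^\eta\bigl(i\pa\opa\Phi - \eta\, i\pa\Phi \wedge \opa\Phi\bigr)$, valid since $\rho < 0$ on $\Omega$, where $\Phi := -\log|\rho| = -\log(-\operatorname{Im} w_U) - \log g_U$ on $\Omega \cap \widetilde U$; thus $-|\rho|^\eta$ is strictly plurisubharmonic there precisely when $Q_\eta := i\pa\opa\Phi - \eta\, i\pa\Phi \wedge \opa\Phi > 0$. As $\operatorname{Im} w_U$ is pluriharmonic, the summand $-\log(-\operatorname{Im} w_U)$ of $\Phi$ contributes $i\pa\opa(-\log(-\operatorname{Im} w_U)) = i\pa(-\log(-\operatorname{Im} w_U)) \wedge \opa(-\log(-\operatorname{Im} w_U))$, a positive form of rank one pointing in the $dw_U$-direction which blows up like $(\operatorname{Im} w_U)^{-2}$ as $\operatorname{Im} w_U \to 0$. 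Hence for $\eta \in (0,1)$ the coefficient of $i\, dw_U \wedge d\ol{w}_U$ in $Q_\eta$ is positive near $M$, and by the Schur-complement criterion the positivity of $Q_\eta$ near $M$ amounts to that of the tangential $(1,1)$-form obtained from $Q_\eta$ by eliminating the $dw_U$-direction. Completing the square --- where the algebraic identity $\eta + \eta^2/(1-\eta) = \eta/(1-\eta)$ enters, together with the foliated-chart formulae for $\alpha_h$ and $\Theta_h$ from \S\ref{foliation} applied to $h = h_\rho$ (so $h_U = g_U|_M$) --- one finds that this tangential form converges, as one approaches $M$ and uniformly on the compact $M$, to $i\Theta_h - \frac{\eta}{1-\eta}\, i\alpha_h \wedge \ol{\alpha_h}$.

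From this, using the uniform convergence and the compactness of $M$: if $i\Theta_h - \frac{\eta}{1-\eta}\, i\alpha_h \wedge \ol{\alpha_h} > 0$ on $M$ then $Q_\eta > 0$ on a one-sided neighborhood of $M$ inside $\Omega$, whence $\eta \le \eta_\rho$; conversely strict plurisubharmonicity of $-|\rho|^\eta$ near $M$ forces the limiting form to be non-negative on $M$, and combining this with the analysis of \cite{adachi_localformula} gives $\eta_\rho = \eta_{h_\rho}$. Taking the supremum over defining functions on one side, and over hermitian metrics (each being an $h_\rho$) on the other, yields $\eta(\Omega) = \eta(M)$. I expect the analysis near $M$ to be the main obstacle --- making the Schur-complement reduction rigorous and, above all, reconciling the strict and non-strict inequalities across the boundary limit; by contrast, the normal direction imposes only the harmless constraint $\eta < 1$, a consequence of the blowing-up rank-one term.
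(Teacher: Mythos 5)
Your argument hinges on the claim that every point of $M$ has ambient holomorphic coordinates $(z_U,w_U)$ with $M\cap\widetilde U=\{\operatorname{Im}w_U=0\}$. For a merely $\Cont^\infty$-smooth Levi-flat hypersurface this local flattening is false in general: take $M=\C_z\times\gamma\subset\C^2$, where $\gamma\subset\C_w$ is a smooth but nowhere real-analytic curve. If a local biholomorphism straightened $M$, then composing the new coordinate $w'$ with the inclusion of a transversal $\{z=0\}$ would give a holomorphic function $F$ with $F'(w_0)\neq 0$ mapping $\gamma$ into $\R$, forcing $\gamma$ to be real-analytic --- a contradiction. Flattening amounts to a (local) holomorphic extension of the Levi foliation together with a holomorphic first integral, and the absence of such an extension in the smooth category is exactly the difficulty the paper is designed to circumvent. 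Consequently your local model $\rho=g_U\cdot\operatorname{Im}w_U$, the chart-by-chart definition of $h_\rho$, the Schur-complement computation in which the normal direction is ``harmless'' because $\operatorname{Im}w_U$ is pluriharmonic, and above all the converse construction by patching the models $g_U|_M\cdot\operatorname{Im}w_U$ with a partition of unity, all rest on a hypothesis that fails for the hypersurfaces covered by the Lemma.

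There is a second, related gap in the converse direction even if one grants some local model: producing a defining function $\rho$ whose associated metric is the prescribed $h$ is the easy part (any positive rescaling of a fixed defining function adjusts the $1$-jet along $M$ at will); the hard part is arranging that $-|\rho|^\eta$ is \emph{strictly plurisubharmonic on a one-sided neighborhood} for every $\eta<\eta_h$, i.e.\ that $\eta_\rho\geq\eta_h$. This requires controlling the normal--normal and mixed terms of the complex Hessian of $\rho$ without a holomorphic flattening, and it is precisely what the refined constructions cited in the paper (Brunella in the case of an extendable holomorphic foliation, and Ohsawa, Biard--Iordan, and \cite{adachi_aspm} in general) accomplish; your partition-of-unity patching does not address it. The paper's actual proof therefore splits differently from yours: the inequality $\eta(\Omega)\leq\eta(M)$ is obtained from the construction $\rho\mapsto h_\rho$ of \cite{adachi_localformula} (which uses the normal derivative of $\rho$ and needs no flattening), and the reverse inequality is reduced to these nontrivial inverse constructions rather than to a local computation in a flat model.
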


\begin{proof}
It is proved in \cite[Theorem 1.1]{adachi_localformula} that 
one can construct a smooth hermitian metric $h^2_\rho$  of $N^{1,0}_M$ 
from a given smooth defining function $\rho$ of $M$ with $\eta_\rho > 0$
so that $\eta_\rho = \eta_{h_\rho}$.
Hence, $\eta(\Omega) \leq \eta(M)$. 

To derive the other inequality, it suffices to show that any hermitian metric $h^2$ of $N^{1,0}$ with $\eta_h > 0$,
which condition is equivalent to $i\Theta_h > 0$ as quadratic forms on $T^{1,0}_M$, can be obtained by the construction above from a defining function of $M$.
This inverse construction originates from the work of Brunella \cite{brunella2008} where he proved that 
this is possible if the Levi foliation of $M$ extends to a holomorphic foliation on a neighborhood of $M$. 
Although the extended holomorphic foliation may not exist in our setting, 
we are able to apply refined constructions explained in \cite[\S1]{ohsawa2013}, \cite[Proposition 1]{biard-iordan}, 
or \cite[Proposition 3.1]{adachi_aspm} and finish the proof.
\end{proof}

\begin{Rem}
We have restricted ourselves not to formulate the results for Levi-flat real hypersurfaces with finite differentiability 
because we have a technical problem at this point. 
The construction from defining functions to hermitian metrics in \cite{adachi_localformula} loses one order in differentiability 
since taking its normal derivative, 
although the inverse constructions in \cite{biard-iordan} or \cite{adachi_aspm} do not give us a gain in differentiability.
So we cannot simply state that any $\Cont^k$-smooth hermitian metric can be obtained from a $\Cont^{k}$ or $\Cont^{k+1}$-smooth defining function
for $2 \leq k < \infty$ unlike in the case $k = \infty$. 
\end{Rem}

\section{The Proof of Lemma \ref{main} and a Remark}
\label{sect:proof}
\subsection{Proof of Lemma \ref{main}}
Now we shall give the proof of Lemma \ref{main}.
\begin{proof}[Proof of Lemma \ref{main}]
Suppose the contrary: $\eta(M) > 1/(n+1)$. By definition, there exists a smooth hermitian metric of $N^{1,0}_M$, say $h^2$, such that
\[
i\Theta_{h} - \frac{1}{n} i \alpha_h \wedge \ol\alpha_h > 0
\]
as quadratic forms on $T^{1,0}_M$. 

By taking a double covering of $M$ if necessary, we may assume that $M$ is oriented. 
We let $\eta := h_U dt_U$ where $t_U$ is the transverse coordinate of a positively-oriented foliated chart $(U, (z_U,t_U))$ and $h_U := h(\base{t_U})$. 
Then we see that $\eta$ is a well-defined 1-form on $M$,
and that $\Theta_h \wedge \eta$, $\alpha_h \wedge \eta$ and $\ol{\alpha_h} \wedge \eta$ make sense as differential forms on $M$
regardless of the choice of extensions of $\alpha_h$ or $\Theta_h$ to tensors on $\C \otimes_\R TM$. 
Among these forms, we can show the equalities $(d\alpha_h) \wedge \eta = \Theta_h \wedge \eta$ and $d\eta = (\alpha_h + \ol{\alpha_h}) \wedge \eta$ 
from straightforward computation on the foliated chart. 

Now we obtain by direct computation that 
\begin{align*}
& d \left( (i\Theta_{h} - \frac{1}{n} i \alpha_h \wedge \ol{\alpha_h})^{n-1} \wedge i\alpha_h \wedge \eta \right) \\
&= (n-1) (i\Theta_{h} - \frac{1}{n} i \alpha_h \wedge \ol{\alpha_h})^{n-2} \wedge  \frac{1}{n} i \Theta_h \wedge i\alpha_h \wedge \ol{\alpha_h} \wedge \eta  \\
& \quad + (i\Theta_{h} - \frac{1}{n} i \alpha_h \wedge \ol{\alpha_h})^{n-1} \wedge (i\Theta_h - i\alpha_h \wedge \ol{\alpha_h})\wedge \eta \\
&= (i\Theta_{h} - \frac{1}{n} i \alpha_h \wedge \ol{\alpha_h})^{n} \wedge \eta,
\end{align*}
and Stokes' theorem yields a contradiction: 
\begin{align*}
0 &< \int_M (i \Theta_{h} - \frac{1}{n} i \alpha_h \wedge \ol{\alpha_h})^{n} \wedge \eta \\
   &= \int_M d \left( (i\Theta_{h} - \frac{1}{n} i \alpha_h \wedge \ol{\alpha_h})^{n-1} \wedge i\alpha_h \wedge \eta \right) \\
   &= 0.
\end{align*}
\end{proof}

\begin{Rem}
The proof shows in particular that $\int_M i\Theta_h \wedge \eta = \int_M i\alpha_h \wedge \ol{\alpha}_h \wedge \eta$ always holds when $\dim_\R M = 3$. 
This equality well explains the behavior of the Diederich--Fornaess exponent of an explicit example described in \cite[\S5]{adachi_aspm}.
\end{Rem}

\subsection{The approach of Bejancu and Deshmukh}
We give a remark that the substantial content of Lemma \ref{main} has been already observed by Bejancu and Deshmukh \cite{bejancu_deshmukh-1996} 
in the context of differential geometry. 
\begin{Rem}
When $\dim_\R M = 3$, the integrand $(i\Theta_{h} - i \alpha_h \wedge \ol{\alpha_h}) \wedge \eta$ was used in \cite{bejancu_deshmukh-1996} to show that the totally real Ricci curvature of compact Levi-flat real hypersurfaces in K\"ahler surfaces cannot be everywhere positive. 

Let us explain this coincidence. Suppose that we have an oriented smooth Levi-flat real hypersurface $M$ in a K\"ahler surface $(X, \omega)$. 
We restrict on $M$ the K\"ahler metric $\omega$ as a Riemannian metric and 
consider its Levi-Civita connection $\nabla^M$ and Ricci curvature $\Ric^M$. 
We also consider the Gauss--Kronecker curvature $G_{\mathcal{F}/M}$ of the leaves of the Levi foliation $\mathcal{F}$ in $M$. 
Take the signed distance function to $M$ with respect to the given K\"ahler metric $\omega$ and induce a hermitian metric $h^2$ of $N^{1,0}_M$ from it. 
Then, we can observe by direct computation that 
\begin{align*}
4(i\Theta_{h} - i \alpha_h \wedge \ol{\alpha_h})
& = (\Ric^M(\xi, \xi) - 2G_{\mathcal{F}/M})\, \omega|T^{1,0}_M \otimes T^{0,1}_M  \\
& = (\Ric^M(\xi, \xi)  - \frac{1}{2}\| d\eta \|^2 + \|\nabla^M\xi\|^2)\, \omega|T^{1,0}_M \otimes T^{0,1}_M
\end{align*}
where $\xi$ is the Reeb vector field of $M$ chosen so that it is normalized and orthogonal to $H_M$ with respect to $\omega$ and positively directed, and $\eta$ is the metric dual of $\xi$. 
The last line is exactly the integrand used in \cite{bejancu_deshmukh-1996}. 
We leave the details of this computation to the reader, who can find the techniques needed in \cite{adachi-brinkschulte_ricci} and  \cite{bejancu_deshmukh-1996}.
\end{Rem}

\subsection{Open Question}
We conclude this paper with stating an open question explicitly.

\begin{Que*}
Can we formulate the Diederich--Fornaess index for any CR manifold of hypersurface type?
Needless to say, it should agree with the Diederich--Fornaess index of its complemental domain when it is realized as the boundary real hypersurface 
of a domain in a complex manifold. 
Can we prove the global estimate of Fu and Shaw, and Brinkschulte and the author for this index in its full generality?
\end{Que*}

\begin{bibdiv}
\begin{biblist}

\bib{adachi_localformula}{article}{
   author = {Adachi, Masanori},
   title = {A local expression of the Diederich--Fornaess exponent and the exponent of conformal harmonic measures},
   status = {to appear in Bull. Braz. Math. Soc. (N.S.)},
   eprint={arXiv:1403.3179},
}

\bib{adachi_aspm}{article}{
   author = {Adachi, Masanori},
   title = {On a global estimate of the Diederich--Fornaess index of Levi-flat real hypersurfaces},
   status = {to appear in Adv. Stud. Pure. Math.},
   eprint = {arXiv:1410.2693},
}

\bib{adachi-brinkschulte2014}{article}{
   author = {Adachi, Masanori},
   author = {Brinkschulte, Judith},
   title = {A global estimate for the Diederich--Fornaess index of weakly pseudoconvex domains},
   status = {to appear in Nagoya Math. J.},
   eprint = {arXiv:1401.2264},
}

\bib{adachi-brinkschulte_ricci}{article}{
   author = {Adachi, Masanori},
   author = {Brinkschulte, Judith},
   title = {Curvature restrictions for Levi-flat real hypersurfaces in complex projective planes},
   status = {preprint},
   eprint = {arXiv:1410.2695},
}

\bib{bejancu_deshmukh-1996}{article}{
   author={Bejancu, Aurel},
   author={Deshmukh, Sharief},
   title={Real hypersurfaces of ${\bf C}{\rm P}^n$ with non-negative
   Ricci curvature},
   journal={Proc. Amer. Math. Soc.},
   volume={124},
   date={1996},
   number={1},
   pages={269--274},
}

\bib{berndtsson-charpentier}{article}{
   author={Berndtsson, Bo},
   author={Charpentier, Philippe},
   title={A Sobolev mapping property of the Bergman kernel},
   journal={Math. Z.},
   volume={235},
   date={2000},
   number={1},
   pages={1--10},
}

\bib{biard-iordan}{article}{
   author={Biard, S{\'e}verine},
   author={Iordan, Andrei},
   title={Non existence of Levi flat hypersurfaces with positive normal bundle in compact K\"ahler manifolds of dimension $\geq 3$},
   status={preprint},
   eprint={arXiv:1406.5712},
}

\bib{brunella2008}{article}{
   author={Brunella, Marco},
   title={On the dynamics of codimension one holomorphic foliations with
   ample normal bundle},
   journal={Indiana Univ. Math. J.},
   volume={57},
   date={2008},
   number={7},
   pages={3101--3113},
}

\bib{diederich-fornaess1977}{article}{
   author={Diederich, Klas},
   author={Fornaess, John Erik},
   title={Pseudoconvex domains: bounded strictly plurisubharmonic exhaustion
   functions},
   journal={Invent. Math.},
   volume={39},
   date={1977},
   number={2},
   pages={129--141},
}

\bib{fu-shaw2014}{article}{
   author={Fu, Siqi},
   author={Shaw, Mei-Chi},
   title={The Diederich-Forn{\ae}ss exponent and non-existence of Stein domains with Levi-flat boundaries},
   journal={J. Geom. Anal.},
   status={published online on 25 November 2014},
   doi={10.1007/s12220-014-9546-6}
}

\bib{ohsawa-sibony1998}{article}{
   author={Ohsawa, Takeo},
   author={Sibony, Nessim},
   title={Bounded p.s.h. functions and pseudoconvexity in K\"ahler manifold},
   journal={Nagoya Math. J.},
   volume={149},
   date={1998},
   pages={1--8},
}

\bib{ohsawa2013}{article}{
   author={Ohsawa, Takeo},
   title={Nonexistence of certain Levi flat hypersurfaces in K\"ahler
   manifolds from the viewpoint of positive normal bundles},
   journal={Publ. Res. Inst. Math. Sci.},
   volume={49},
   date={2013},
   number={2},
   pages={229--239},
}

\end{biblist}
\end{bibdiv}

\end{document}